\documentclass{amsart}[11pt, a4paper, headings=standardclasses, dvipsnames, bibliography=totocnumbered]

\usepackage[utf8]{inputenc}
\usepackage[english]{babel}

\usepackage[style=alphabetic, backend=biber, maxalphanames=4, maxbibnames=4]{biblatex}
\usepackage{hyperref}

\usepackage{import}

\usepackage{tikz-cd}
\usepackage{xcolor}
\usetikzlibrary{arrows,decorations.pathreplacing,calligraphy,patterns,bending,calc}

\definecolor{Green}{HTML}{008000}
\definecolor{BrightGreen}{HTML}{32CD32}

\usepackage{wrapfig}
\usepackage{subcaption}
\usepackage{xfrac}
\usepackage[makeroom]{cancel}
\usepackage{array}
\usepackage{tabularx}
\usepackage{booktabs}
\usepackage{float}
\usepackage{faktor}

\usepackage[T1]{fontenc}
\usepackage{hyperref}
\usepackage{amsfonts}
\usepackage{graphicx}
\usepackage{amsthm}
\usepackage{amssymb}
\usepackage{amsmath}
\usepackage{mathtools}
\usepackage{hyperref}

\usepackage{tikz}

\newcommand{\devil}{{\color{epurple} \begin{tikzpicture}[baseline=-0.5ex,scale=0.15] \fill (0,0) circle (1); \fill (-0.5,0.8) -- (-0.8,1.8) -- (-0.1,1.1) -- cycle; \fill (0.5,0.8) -- (0.8,1.8) -- (0.1,1.1) -- cycle; \fill[white] (-0.3,0.2) circle (0.12); \fill[white] (0.3,0.2) circle (0.12); \draw[white,thick] (-0.4,-0.3) arc (200:340:0.4); \end{tikzpicture}} }

\newcommand{\C}{\mathbb{C}} 
\newcommand{\U}{U_q(\mathfrak{sl}_2)}
\newcommand{\mU}{\mathbf{1}}
\DeclareMathOperator{\Hom}{Hom}
\newcommand{\id}{\mathrm{id}}

\newtheorem{theo}{Theorem}
\numberwithin{theo}{section}
\newtheorem{lemm}[theo]{Lemma}

\theoremstyle{definition}
\newtheorem{defi}[theo]{Definition}

\newtheorem{rema}[theo]{Remark}
\newtheorem{obse}[theo]{Observation}

\newtheorem{fact}[theo]{Fact}
\newtheorem{nota}[theo]{Notation}

\colorlet{eorange}{orange!60!black}
\colorlet{ered}{red!60!black}
\colorlet{eblue}{blue!60!black}
\colorlet{egreen}{green!60!black}
\colorlet{epurple}{purple!60!black}
\definecolor{gold}{RGB}{255,215,0}
\colorlet{egold}{epurple}

\newcommand{\e}{\textcolor{ered}{E}}
\newcommand{\f}{\textcolor{eblue}{F}}
\renewcommand{\k}{\textcolor{egreen}{K}}

\newcommand{\Rep}{\mathbf{Rep^{fd}}(\U)}

\usepackage{pgfplots}
\usetikzlibrary{arrows}

\newcommand*{\blank}{{-}}
\newcommand*{\bblank}{{=}}

\usetikzlibrary{calc}
\usetikzlibrary{tqft}
\newcommand{\cbox}[1]{ \vcenter{ \hbox{#1} } }
\usetikzlibrary{patterns,decorations.pathreplacing}
\tikzset{
	tldiagram/.style={thick, scale=0.35}
}
\newcommand{\tlcoord}[2]
{
	(2*#2 , 3*#1)
}

\newcommand{\lineup}{-- ++(0,3)}
\newcommand{\halflineup}{-- ++(0,1.5)}

\newcommand{\linewave}[2]{
	.. controls +(0,1.5*#1) and +(0,1.5*-#1) .. ++(2*#2, 3*#1)
}

\newcommand{\capright}{arc (180:0:1)}
\newcommand{\capleft}{arc (0:180:1)}

\newcommand{\dcaprightsmall}{arc (180:90:1) node {\scalebox{0.7}{\textbullet}} arc (90:0:1)}

\newcommand{\dcuprightsmall}{arc (180:270:1) node {\scalebox{0.7}{\textbullet}} arc (270:360:1)}

\newcommand{\negtwista}[2]{
	\draw [ultra thick] \tlcoord{0+3*#1}{0+2*#2} -- \tlcoord{0.45+3*#1}{0.4+2*#2};
	\draw [ultra thick] \tlcoord{0.55+3*#1}{0.65+2*#2} -- \tlcoord{1+3*#1}{1+2*#2};
	\draw  \tlcoord{0+3*#1}{1+2*#2} -- \tlcoord{1+3*#1}{0+2*#2};
}

\newcommand{\negtwist}[2]{
	\negtwista{#1}{#2}
	\negtwist{1+#1}{#2}
}

\newcommand{\predtlcupcap} 
{
	\begin{tikzpicture}[scale=0.092]
		\draw \tlcoord{1}{0} \dcuprightsmall;
		\draw \tlcoord{0}{0} \dcaprightsmall;
	\end{tikzpicture}
}

\newcommand{\posredbluecrossing}[2]{
	\draw[black] \tlcoord{#1}{#2} \linewave{1}{1};
	\draw[white, double=epurple] \tlcoord{#1}{#2+2} \linewave{1}{-1} ;
}

\newcommand{\posblueredcrossing}[2]{
	\draw[black] \tlcoord{#1}{#2+2} \linewave{1}{-1} ;
	\draw[white, double=epurple] \tlcoord{#1}{#2} \linewave{1}{1};
}
\newcommand{\iso}{
	\cbox{
	\begin{tikzpicture}[tldiagram, scale=0.65]
		\posblueredcrossing{0}{0}
	\end{tikzpicture}
	}
}
\newcommand{\isolabel}{
	\cbox{
		\begin{tikzpicture}[tldiagram, scale=0.65]
			\posblueredcrossing{0}{0}
			\node (n1) at \tlcoord{-0.25}{0} {$S$};
			\node (n2) at \tlcoord{-0.25}{1} {$V$};
			\node (n3) at \tlcoord{1.25}{0} {$V$};
			\node (n3) at \tlcoord{1.25}{1} {$S$};
		\end{tikzpicture}
	}
}
\newcommand{\inv}{
	\cbox{
		\begin{tikzpicture}[tldiagram, scale=0.65]
			\posredbluecrossing{0}{0}
		\end{tikzpicture}
	}
}
\newcommand{\blackcap}{
	\cbox{
		\begin{tikzpicture}[tldiagram, scale=0.65]
			\draw[black] \tlcoord{0}{0} \capright;
		\end{tikzpicture}
	}
}
\newcommand{\blackcaplabel}{
	\cbox{
		\begin{tikzpicture}[tldiagram, scale=0.65]
			\draw[black] \tlcoord{0}{0} \capright;
			\node (n1) at \tlcoord{-0.25}{0} {$V$};
			\node (n2) at \tlcoord{-0.25}{1} {$V$};
			\node (n3) at \tlcoord{0.58}{0.5} {$\mU$};
		\end{tikzpicture}
	}
}

\newcommand{\purplecap}{
	\cbox{
		\begin{tikzpicture}[tldiagram, scale=0.65]
			\draw[epurple] \tlcoord{0}{0} \capright;
		\end{tikzpicture}
	}
}
\newcommand{\purplecaplabel}{
	\cbox{
		\begin{tikzpicture}[tldiagram, scale=0.65]
			\draw[epurple] \tlcoord{0}{0} \capright;
			\node (n1) at \tlcoord{-0.25}{0} {$S$};
			\node (n2) at \tlcoord{-0.25}{1} {$S$};
			\node (n3) at \tlcoord{0.58}{0.5} {$\mU$};
		\end{tikzpicture}
	}
}
\newcommand{\invlabel}{
	\cbox{
	\begin{tikzpicture}[tldiagram, scale=0.65]
		\posredbluecrossing{0}{0}
		\node (n1) at \tlcoord{-0.25}{0} {$V$};
		\node (n2) at \tlcoord{-0.25}{1} {$S$};
		\node (n3) at \tlcoord{1.25}{0} {$S$};
		\node (n3) at \tlcoord{1.25}{1} {$V$};
	\end{tikzpicture}
	}
}

\newcommand{\goldiso}{
	\cbox{
		\begin{tikzpicture}[tldiagram, scale=0.65]
			\draw[black, dashed] \tlcoord{0}{1} \linewave{1}{-1};
			\draw[white, double=epurple] \tlcoord{0}{0} \linewave{1}{1};
		\end{tikzpicture}
	}
}
\newcommand{\goldinv}{
	\cbox{
		\begin{tikzpicture}[tldiagram, scale=0.65]
			\draw[black, dashed] \tlcoord{0}{0} \linewave{1}{1};
			\draw[white, double=epurple] \tlcoord{0}{1} \linewave{1}{-1};
		\end{tikzpicture}
	}
}
				
\title{No braiding for quantum sl2 reps - a short proof}
\author{Liao Wang, Zbigniew Wojciechowski}
\begin{document}
	\begin{abstract}
		We give a short and self-contained proof that the category of all finite dimensional representations of quantum $\mathfrak{sl}_2$ is not braided. 
	\end{abstract}
	\maketitle
	\section{Introduction}
	\begin{center}
		\textit{`As commonly known the category $\Rep$ of all finite dimensional representations of the quantum group $\U$ is braided monoidal.'}
	\end{center} 
	This is a common phrase at some conferences, which is then often (but not always) interrupted by one enlightened researcher pointing out that is not, and that one has to restrict to an appropriate subcategory. Sometimes other scientists start energetically nodding their head or saying things like `Yes, of course', while others are confused. The goal of this note is to provide a detailed, self-contained, but short proof of the fact, there is no possible braiding on $(\Rep, \otimes)$ due to an impish sign $\textcolor{epurple}{-1}$.
	
	This fact is certainly known to experts for at least 40 years, yet there is no standard reference in the literature. The closest references we could find are the following. One is Jantzen's lecture notes \cite[\S3.19]{jantzen96}, where it is discussed that the action of the quasi $R$-matrix cannot be properly rescaled. Our computation is more general, since we do not use $R$-matrices. The other one is \cite[\S4.1]{beliakovagorsky24} by Beliakova--Gorsky, where it is mentioned that a sign appears on weight spaces, when trying to find a braiding. Our approach is based on the diagrammatic presentation in \cite{stroppelwojciechowski26}. We want to point out that \cite{tingley17}, which discusses $\U$ and subtle sign issues focuses on a different minus sign related to the equivalence to the Temperley--Lieb category and not to type $-1$ representations. 
	
	\subsection*{Acknowledgments}
	We thank everyone whom we had the above nitpicky conversation with, thus motivating us to write this paper. Specifially we would like to thank Daniel Bermudez, Istvan Heckenberger, Ulrich Krähmer, Stefan Kolb, Carlos Mendes Góngora, Isak Sundelius, Mateusz Stroiński, Catharina Stroppel, and Eduardo Venturini. We thank Paul Wedrich for the final remark in this paper.

	\subsection*{Tools and computational resource disclosure}
	We used AI to help with \LaTeX, specifially the \devil-emoji created by Microsoft Copilot. We asked Copilot and ChatGPT 5.6 for references proving or disproving that the category of all finite dimensional representations is braided monoidal. While Microsoft Copilot tried to convince us that the category is braided and that our paper is wrong, ChatGPT was aware of sign problems and that one has to take a subcategory. ChatGPT didn't know a specific reference proving that it is not braided monoidal, but it helped us find \cite[\S4.1]{beliakovagorsky24}. Furthermore it started proving it using that paper, even though we didn't ask for it. We completely ignored that part, since we have our own proof.
	
	\newpage
	\section{The argument}
	We work over a field $k$ of $\operatorname{char}(k)\neq 2$ and fix a unit $q\in k^{\times}$, for instance $k=\C(q)$, but all our arguments work for an arbitrary commutative ring $k$ with $1+1\neq 0$. 
	\begin{defi}
		The \textbf{quantum group} $\U$ is the $k$-algebra with generators $E$, $F$, $K$ and $K^{-1}$, subject to the following relations
		\[
			KE=q^2EK, \quad KF=q^{-2}FK, \quad EF-FE=\frac{K-K^{-1}}{q-q^{-1}}, \quad KK^{-1}=1=K^{-1}K.
		\]
	\end{defi}
	
	\begin{defi}
		The \textbf{(type $1$) standard representation} $V$ of $\U$ is the $2$-dimensional vector space with basis $x$ and $y$. 
		The \textbf{type $1$ trivial representation} $\mU$ of $\U$ is the $1$-dimensional vector space with basis $1$. 
		The \textbf{type $-1$ trivial representation} $S$ of $\U$ is the $1$-dimensional vector space with basis $\devil$. The action of $\U$ on these representations is depicted as follows
		\[
		V\coloneqq \, \begin{tikzcd}
			y \arrow[r, "1", bend left, ered] \arrow["q^{-1}"', loop, distance=2em, in=305, out=235, egreen] & x \arrow[l, "1", bend left, eblue] \arrow["q"', loop, distance=2em, in=305, out=235, egreen]
		\end{tikzcd} \quad , \qquad  \mU\coloneqq \, \begin{tikzcd}
		1 \arrow["1"', loop, distance=2em, in=305, out=235, egreen]
	\end{tikzcd}\quad , \qquad S\coloneqq \, \begin{tikzcd}
			\devil \arrow["-1"', loop, distance=2em, in=305, out=235, egreen]
		\end{tikzcd}.
		\]
		Here the action of $\k$ is depicted by a \textcolor{egreen}{green arrow}, $\f$ by a \textcolor{eblue}{blue arrow} and $\e$ by a \textcolor{ered}{red arrow}. The scalars give the structure constants of the action. For instance $\f\cdot x=\textcolor{eblue}{1}\cdot y=y$, while $\k\cdot x=\textcolor{egreen}{q} x$. Out-going arrows which are not drawn are regarded as that operator acting by zero, for instance $\e\cdot x=0$ and $\f\cdot \devil=0$. 
	\end{defi}
	We will use these `action graphs' as in the previous definition from now on.
	\begin{fact}
		We denote by $\otimes\coloneqq \otimes_{k}$ the tensor product of $k$-vector spaces. The comultiplication $\Delta\colon \U\to \U\otimes \U$ is the algebra homomorphism defined by
		\[
		\Delta(E)=E\otimes 1 + K\otimes E, \quad \Delta(F)=F\otimes K^{-1}+ 1\otimes F, \quad \Delta(K)=K\otimes K.
		\]
		It turns $\U$ into a bialgebra and even a Hopf algebra. The category $\Rep$ of all left finite dimensional $\U$-representations is equipped with the monoidal structure $\otimes$ coming from this comultiplication. The monoidal unit is the type $1$ trivial representation $\mU$.
	\end{fact}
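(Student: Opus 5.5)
The plan is to verify the Hopf algebra axioms directly on generators and then transport them to $\Rep$ by the standard argument. Since $\U$ is presented by generators and relations, I will first check that the assignment $\Delta$ on $E,F,K,K^{-1}$, with $\Delta(K^{-1})=K^{-1}\otimes K^{-1}$, respects each defining relation. Only then does it extend to an algebra homomorphism $\U\to\U\otimes\U$.

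\textbf{Relations.} The relation $KK^{-1}=1=K^{-1}K$ is immediate because $\Delta(K)$ is grouplike. For $KE=q^2EK$ I compare $(K\otimes K)(E\otimes 1+K\otimes E)$ with $q^2(E\otimes 1+K\otimes E)(K\otimes K)$ term by term, using $KE=q^2EK$ in one tensor factor and the commutation of $K$ with itself in the other. The relation $KF=q^{-2}FK$ is handled in the same way.

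\textbf{The key relation.} The main computation is $\Delta(E)\Delta(F)-\Delta(F)\Delta(E)$, and I would organize it by the four types of terms.
\begin{itemize}
\item The mixed terms $(E\otimes 1)(1\otimes F)=E\otimes F$ and $(1\otimes F)(E\otimes 1)=E\otimes F$ cancel.
\item The mixed terms $KF\otimes EK^{-1}$ and $FK\otimes K^{-1}E$ also cancel. Indeed, $KF=q^{-2}FK$ and $EK^{-1}=q^{2}K^{-1}E$, so the two scalars $q^{-2}$ and $q^{2}$ multiply to $1$.
\end{itemize}
What survives is $[E,F]\otimes K^{-1}+K\otimes[E,F]$. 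Substituting the defining relation gives
\[
\frac{(K-K^{-1})\otimes K^{-1}+K\otimes (K-K^{-1})}{q-q^{-1}}=\frac{K\otimes K-K^{-1}\otimes K^{-1}}{q-q^{-1}}=\Delta\!\left(\frac{K-K^{-1}}{q-q^{-1}}\right),
\]
because the terms $\pm 1\otimes 1$ cancel. This is the step where a mistake is most likely, so I would write it out carefully.

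\textbf{Bialgebra and Hopf structure.} Next I check coassociativity $(\Delta\otimes\id)\Delta=(\id\otimes\Delta)\Delta$. Both sides are algebra homomorphisms, so it suffices to check this on generators, where it is a one-line expansion. For instance, both sides send $E$ to $E\otimes1\otimes1+K\otimes E\otimes 1+K\otimes K\otimes E$. The counit is $\varepsilon(E)=\varepsilon(F)=0$ and $\varepsilon(K^{\pm1})=1$. It respects the relations trivially, since the right-hand side of the $[E,F]$ relation maps to $0$, and the counit axioms hold on generators. For the Hopf structure I take the antipode $S(E)=-K^{-1}E$, $S(F)=-FK$, $S(K)=K^{-1}$. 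I would check that it is an anti-homomorphism respecting the relations and that $m(S\otimes\id)\Delta=\eta\varepsilon=m(\id\otimes S)\Delta$ holds on generators. This suffices because the set on which the antipode identity holds is closed under products once $S$ is an anti-homomorphism.

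\textbf{Monoidal structure on $\Rep$.} Given representations $M$ and $N$, the space $M\otimes N$ becomes a representation via $\Delta$, and tensor products of morphisms remain morphisms. Coassociativity makes the vector-space associator $(M\otimes N)\otimes P\cong M\otimes(N\otimes P)$ a $\U$-linear map. The counit axioms make the unitors $\mU\otimes M\cong M\cong M\otimes\mU$ $\U$-linear, where $\mU$ is the representation on which $\U$ acts through $\varepsilon$. This $\mU$ is exactly the type $1$ trivial representation, since $K$ acts by $1$ and $E,F$ act by $0$. The pentagon and triangle axioms are inherited from $k$-vector spaces, and finite dimensionality is preserved under $\otimes$.
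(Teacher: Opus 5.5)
Your proposal is correct. The paper gives no proof of this statement. It is recorded as a \emph{Fact}: the Hopf algebra structure on $\U$ and the induced monoidal structure on $\Rep$ are taken as standard. Only their consequences are used later, namely the explicit actions on $V\otimes S$, $S\otimes V$ and $V\otimes V$.

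Your generator-by-generator verification is the standard argument that this citation stands in for:
\begin{itemize}
\item $\Delta$, $\varepsilon$ and the antipode respect the defining relations.
\item Coassociativity and the counit axioms reduce to generators, because both sides are algebra maps.
\item The antipode identity reduces to generators, because its solution set is closed under products once the antipode is an anti-homomorphism.
\item Coassociativity and the counit axioms make the vector-space associators and unitors $\U$-linear. The unit object is the representation through $\varepsilon$, i.e.\ the type $1$ trivial representation $\mU$.
\end{itemize}
The paper's choice keeps the note short, while yours makes it fully self-contained.

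There is one small slip in the key computation. The terms that cancel in $(K-K^{-1})\otimes K^{-1}+K\otimes(K-K^{-1})$ are $\pm K\otimes K^{-1}$, not $\pm 1\otimes 1$. The result you display, $\frac{K\otimes K-K^{-1}\otimes K^{-1}}{q-q^{-1}}$, is nevertheless right.

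Also note that the defining relation, and hence your computation and your counit check, implicitly needs $q-q^{-1}$ to be invertible in $k$. This hypothesis is not implied by $\operatorname{char}(k)\neq 2$, and the paper is silent on it as well.
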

	\begin{nota}
		We will use associativity isomorphisms in this paper implicitly in the notation. Thus we pretend like $\Rep,\otimes$ is a strict monoidal category, even though it is not.
	\end{nota}
	The following is the main result of this paper.
	\begin{theo}[Devils Sign] \label{theo no braiding}
		There is no natural isomorphism $c\colon \blank \otimes \bblank \to \bblank \otimes \blank$ turning $(\Rep,\otimes)$ into a braided monoidal category. 
	\end{theo}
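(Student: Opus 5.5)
The plan is to assume a braiding $c$ exists and to pin down the single scalar that determines $c_{S,V}$. I will then evaluate two hexagon identities, each combined with naturality against a ``cap'' morphism. One forces that scalar to square to $1$ and the other forces it to square to $-1$, giving $1=-1$, which is impossible since $1+1\neq 0$.

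\emph{Step 1: compute $\Hom(S\otimes V, V\otimes S)$.} Using $\Delta$, in $S\otimes V$ we get $E(\devil\otimes y)=-\devil\otimes x$ and $F(\devil\otimes x)=\devil\otimes y$. In $V\otimes S$ we get $E(y\otimes\devil)=x\otimes\devil$ and $F(x\otimes\devil)=-y\otimes\devil$. A module map must preserve $K$-weights, so it has the form $\devil\otimes x\mapsto a\,x\otimes\devil$, $\devil\otimes y\mapsto b\,y\otimes\devil$. Commuting with $E$ or $F$ forces $b=-a$. Hence $c_{S,V}=\lambda\varphi$ for some scalar $\lambda$, where $\varphi(\devil\otimes x)=x\otimes\devil$ and $\varphi(\devil\otimes y)=-y\otimes\devil$. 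Since $c_{S,V}$ is an isomorphism, $\lambda$ is a unit.

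\emph{Step 2: introduce the two caps.} The first is $e\colon S\otimes S\to\mU$, $\devil\otimes\devil\mapsto 1$; it is a module map because $K$ acts on $S\otimes S$ by $(-1)^2=1$ and $E,F$ act by zero. The second is $\mathrm{ev}\colon V\otimes V\to\mU$, which vanishes on $x\otimes x$ and $y\otimes y$ and has $\mathrm{ev}(x\otimes y)=1$, $\mathrm{ev}(y\otimes x)=-q$. One checks it is a module map: for instance $E(y\otimes y)=x\otimes y+q^{-1}y\otimes x$ is killed by it. I will also use the standard consequence of the braiding axioms that $c_{X,\mU}$ and $c_{\mU,X}$ are identities under the unit isomorphisms.

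\emph{Step 3: the two hexagons.}
\begin{itemize}
\item First, apply naturality of $c_{-,V}$ to $e$, together with the hexagon $c_{S\otimes S,V}=(c_{S,V}\otimes\id_S)(\id_S\otimes c_{S,V})$. This gives $e\otimes\id_V=(\id_V\otimes e)(c_{S,V}\otimes\id_S)(\id_S\otimes c_{S,V})$. Evaluated on $\devil\otimes\devil\otimes x$, the left side gives $x$ and the right side gives $\lambda^2x$, so $\lambda^2=1$.
\item Second, apply naturality of $c_{S,-}$ to $\mathrm{ev}$, together with $c_{S,V\otimes V}=(\id_V\otimes c_{S,V})(c_{S,V}\otimes\id_V)$. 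This gives $\id_S\otimes\mathrm{ev}=(\mathrm{ev}\otimes\id_S)(\id_V\otimes c_{S,V})(c_{S,V}\otimes\id_V)$. On $\devil\otimes x\otimes y$, the left side is $\devil$. On the right side, $\devil\otimes x\otimes y\mapsto\lambda\,x\otimes\devil\otimes y\mapsto -\lambda^2\,x\otimes y\otimes\devil\mapsto-\lambda^2\devil$. So $\lambda^2=-1$.
\end{itemize}
Combining the two gives $1=-1$, the desired contradiction.

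The main obstacle is Step 1 together with bookkeeping the signs from $K^{-1}$ in $\Delta(F)$ and from $K$ in $\Delta(E)$. That sign is exactly what makes the second computation produce $-\lambda^2$ rather than $\lambda^2$. I would double-check it by verifying the module-map conditions in both directions. Everything else is routine, apart from recalling the unit-compatibility $c_{X,\mU}=\id$, which I will take as standard.
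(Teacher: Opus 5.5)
Your proposal is correct and follows essentially the same route as the paper: you determine $c_{S,V}=\lambda\varphi$ from the computation of $\Hom_{\U}(S\otimes V, V\otimes S)$, then combine the two hexagon identities with naturality against the caps $S\otimes S\to\mU$ and $V\otimes V\to\mU$ (and triviality of braiding with $\mU$) to force both $\lambda^2=1$ and $\lambda^2=-1$. The differences are cosmetic: you evaluate on basis vectors instead of invoking the diagrammatic Brauer relations, and you use $c_{S\otimes S,V}$ rather than its inverse, so you reach $1=-1$ directly without the paper's intermediate step $\lambda=\pm i$; your signs check out, including the $-\lambda^2$ on $\devil\otimes x\otimes y$.
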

	Before we prove the theorem, we need three lemmas, giving us certain morphisms and relations between them.
	\begin{lemm} \label{lemma V otimes S}
		We have
		$V\otimes S\cong S\otimes V$. A basis of  $\Hom_{\U}(V\otimes S, S\otimes V)$ 
		is given by the isomorphism
		\[
			\inv\coloneqq \invlabel\colon \quad x\otimes \devil \mapsto \devil \otimes x, \qquad y\otimes \devil \mapsto \textcolor{epurple}{(-1)} \cdot \devil \otimes y,
		\]
		defined on basis vectors. Its inverse is given by 
		\[
		\iso\coloneqq \isolabel\colon \quad \devil\otimes x \mapsto x\otimes \devil, \qquad \devil\otimes y \mapsto \textcolor{epurple}{(-1)} \cdot y \otimes \devil.
		\]
	\end{lemm}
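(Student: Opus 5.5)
The plan is to compute the actions of $E$, $F$, $K$ on $V\otimes S$ and on $S\otimes V$ explicitly via $\Delta$, write both as action graphs, and then determine all module maps between them by solving linear equations on the basis.

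First we compute $V\otimes S$ with basis $x\otimes\devil$, $y\otimes\devil$. Since $\Delta(K)=K\otimes K$, the element $K$ acts by $-q$ on $x\otimes\devil$ and by $-q^{-1}$ on $y\otimes\devil$. Because $E\devil=0$, we get $\Delta(E)=E\otimes1+K\otimes E$ acting as $E\otimes 1$, so $E(y\otimes\devil)=x\otimes\devil$. Since $F\devil=0$ and $K^{-1}\devil=-\devil$, the element $\Delta(F)=F\otimes K^{-1}+1\otimes F$ gives $F(x\otimes\devil)=-y\otimes\devil$. On $S\otimes V$ with basis $\devil\otimes x$, $\devil\otimes y$, the $K$-eigenvalues are again $-q$ and $-q^{-1}$. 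Here $E(\devil\otimes y)=K\devil\otimes Ey=-\devil\otimes x$ and $F(\devil\otimes x)=\devil\otimes y$.

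Next we find the Hom space. Any $\U$-linear map $\phi$ commutes with $K$. The eigenvalues $-q$ and $-q^{-1}$ should be distinct, so $\phi$ is diagonal: $\phi(x\otimes\devil)=a\,\devil\otimes x$ and $\phi(y\otimes\devil)=b\,\devil\otimes y$. Compatibility with $F$ applied to $x\otimes\devil$ gives $-b=a$. Compatibility with $E$ applied to $y\otimes\devil$ gives $a=-b$, the same condition. So the Hom space is $k\cdot\inv$, with $a=1$ and $b=-1$, and this map is invertible. For $\iso$ one checks directly that composing it with $\inv$ gives the identity on basis vectors, since $(-1)^2=1$. Alternatively, the same computation in the reverse direction shows that $\iso$ is $\U$-linear.

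The main obstacle is the diagonality step when $q^2=1$, where the eigenvalues $-q$ and $-q^{-1}$ coincide and $K$ no longer separates the weight spaces. In that case I would use $E$ and $F$ instead. The kernel of $E$ is spanned by $x\otimes\devil$ in $V\otimes S$ and by $\devil\otimes x$ in $S\otimes V$, so $\phi$ preserves these lines. The image of $F$ is spanned by $y\otimes\devil$ and by $\devil\otimes y$ respectively, so $\phi$ preserves these lines too. This forces $\phi$ to be diagonal without using $K$. Over a general commutative ring the same argument works, since the $E$ and $F$ relations directly force the off-diagonal coefficients to vanish. For example, $E\phi(x\otimes\devil)=0$ kills the $\devil\otimes y$ coefficient, because $E(\devil\otimes y)=-\devil\otimes x$.
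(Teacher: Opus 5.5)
Your proposal is correct and follows the paper's proof. You compute the actions on $V\otimes S$ and $S\otimes V$ via $\Delta$, and your coefficients match the paper's action graphs. You then use that module maps preserve $K$-eigenspaces, and $E$ and $F$ force the relative sign $b=-a$, which the paper leaves implicit in ``this directly implies the lemma''.

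Your concern about $q^2=1$ is moot. The defining relation $EF-FE=\frac{K-K^{-1}}{q-q^{-1}}$ already requires $q-q^{-1}$ to be invertible, so the eigenvalues $-q$ and $-q^{-1}$ differ by a unit and the $K$-argument works over any ring. Your $E$/$F$ argument for diagonality is still a valid alternative.
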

	\begin{proof}
		As $\U$-representations $V\otimes S$ and $S\otimes V$ look as follows:
		\[
		V\otimes S\colon \, \begin{tikzcd}
			y\otimes \devil \arrow[r, "1", bend left, ered] \arrow["-q^{-1}"', loop, distance=2em, in=305, out=235, egreen] & x\otimes \devil \arrow[l, "-1", bend left, eblue] \arrow["-q"', loop, distance=2em, in=305, out=235, egreen]
		\end{tikzcd} \quad , \qquad S\otimes V\colon \, \begin{tikzcd}
			\devil \otimes y \arrow[r, "-1", bend left, ered] \arrow["-q^{-1}"', loop, distance=2em, in=305, out=235, egreen] & \devil \otimes x \arrow[l, "1", bend left, eblue] \arrow["-q"', loop, distance=2em, in=305, out=235, egreen]
		\end{tikzcd}
		\]
		This directly implies the lemma, since morphisms preserve $K$-eigenspaces.
	\end{proof}
	\begin{obse} \label{observation braiding candidate}
		Assume the the monoidal category $(\Rep, \otimes)$ is braided with braiding natural isomorphism $c\colon \blank \otimes \bblank \, \Rightarrow \, \bblank \otimes \blank$.
		Then by Lemma \ref{lemma V otimes S} we have
		\[
			c_{S, V}=\lambda \cdot \iso \text{ for some $\lambda\in k^{\times}$, and } c_{S, V}^{-1}=\frac{1}{\lambda} \cdot \inv.
		\]
		We will show that such $c_{S,V}$ cannot be natural. In other words we show there is no $\lambda$, such that this `braiding candidate' allows to slide morphisms through.
	\end{obse}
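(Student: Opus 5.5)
The plan is to reduce everything to Lemma~\ref{lemma V otimes S}. The only nontrivial input is that $\Hom_{\U}(S\otimes V, V\otimes S)$ is free of rank one with basis $\iso$. The lemma states this for the opposite direction $\Hom_{\U}(V\otimes S, S\otimes V)$, so I first transfer it.

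\textbf{Step 1: transfer the lemma.} Since $\iso$ and $\inv$ are mutually inverse isomorphisms, the map
\[
\Hom_{\U}(V\otimes S, S\otimes V)\to \Hom_{\U}(S\otimes V, V\otimes S), \qquad f\mapsto \iso\circ f\circ \iso,
\]
is a $k$-linear bijection, with inverse $g\mapsto \inv\circ g\circ\inv$. It sends the basis element $\inv$ to $\iso\circ\inv\circ\iso=\iso$. Hence $\iso$ is a basis of $\Hom_{\U}(S\otimes V,V\otimes S)$. Alternatively, one can redo the $K$-eigenspace argument from the proof of the lemma directly: a morphism must send $\devil\otimes x$ (eigenvalue $-q$) to a multiple of $x\otimes\devil$, and $\devil\otimes y$ to a multiple of $y\otimes \devil$. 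Compatibility with $E$ (or $F$) then forces the two scalars to agree up to exactly the sign built into $\iso$.

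\textbf{Step 2: identify $c_{S,V}$.} Assuming a braiding $c$ exists, its component $c_{S,V}\colon S\otimes V\to V\otimes S$ is a $\U$-morphism. By Step 1, $c_{S,V}=\lambda\cdot\iso$ for a unique $\lambda\in k$.

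\textbf{Step 3: $\lambda$ is a unit.} Since $c_{S,V}$ is an isomorphism, $c_{S,V}^{-1}\in\Hom_{\U}(V\otimes S,S\otimes V)$. By Lemma~\ref{lemma V otimes S}, $c_{S,V}^{-1}=\mu\cdot\inv$ for some $\mu\in k$. Then
\[
\id_{S\otimes V}=c_{S,V}^{-1}\circ c_{S,V}=\lambda\mu\cdot \inv\circ\iso=\lambda\mu\cdot\id_{S\otimes V},
\]
so $\lambda\mu=1$. Thus $\lambda\in k^\times$ and $c_{S,V}^{-1}=\frac1\lambda\cdot\inv$, which is the claimed statement.

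I expect no real obstacle. The only care needed is that the lemma is stated for the opposite Hom-direction, which Step 1 handles. Over a general ring $k$, one must also argue that $\lambda$ is invertible rather than merely nonzero, which Step 3 handles.
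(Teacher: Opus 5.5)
Your proof is correct and takes essentially the paper's approach: the paper just cites Lemma~\ref{lemma V otimes S}, namely that the relevant Hom space is free of rank one, so $c_{S,V}$ is a scalar multiple of the isomorphism $S\otimes V\to V\otimes S$ given there, and invertibility of $c_{S,V}$ forces that scalar to be a unit. You make explicit the two points the paper leaves implicit: transferring the lemma to the direction $S\otimes V\to V\otimes S$, and showing that $\lambda$ is a unit rather than merely nonzero.
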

	The following lemma is a small part of \cite[Definition 4.1, Theorem 4.6]{stroppelwojciechowski26}, where the full list of relations for $\Rep$ is provided. Note that there the \textcolor{red}{red bullet} represents $V$ and the \textcolor{blue}{blue minus} represents $S$.
	\begin{lemm} \label{lemma with minus}
		There is a morphism $V\otimes V\to \mU$ given by 
		\[
			\blackcap\coloneqq \blackcaplabel \colon V\otimes V\to \mU,
		\]
		given on a basis by $x\otimes x \mapsto 0$, $x\otimes y\mapsto -q^{-1}$, $y\otimes x\mapsto 1$, $y\otimes y\mapsto 0$.
		Furthermore the following Brauer relation between morphisms $V\otimes S\otimes V\to S$ holds:
		\[
			\cbox{
				\begin{tikzpicture}[tldiagram, scale=0.65]
					\posblueredcrossing{0}{1}
					\draw[black] \tlcoord{0}{0} \lineup \capright;
					\draw[epurple] \tlcoord{1}{2} \halflineup;
				\end{tikzpicture}
			} = \textcolor{epurple}{(-1)} \cdot \cbox{
				\begin{tikzpicture}[tldiagram, scale=0.65]
					\posredbluecrossing{0}{0}
					\draw[black] \tlcoord{0}{2} \lineup \capleft;
					\draw[epurple] \tlcoord{1}{0} \halflineup;
				\end{tikzpicture}
			}.
		\]
	\end{lemm}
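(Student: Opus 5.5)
The plan is a direct computation on basis vectors. Every map involved is explicitly given on the bases $\{x,y\}$, $\{1\}$ and $\{\devil\}$, and the two sides of the Brauer relation are linear maps $V\otimes S\otimes V\to S$. So it suffices to (1) check that $\blackcap$ is $\U$-linear, and (2) evaluate both sides on the four basis vectors $a\otimes\devil\otimes b$ with $a,b\in\{x,y\}$.

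For step (1), I first use the $K$-action. $K$ acts on $x\otimes x$, $x\otimes y$, $y\otimes x$, $y\otimes y$ by $q^2,1,1,q^{-2}$, and on $\mU$ by $1$. Hence a $K$-equivariant map $V\otimes V\to\mU$ must kill $x\otimes x$ and $y\otimes y$, which matches the formula. Since $E$ and $F$ act by zero on $\mU$, it remains to check that $\blackcap$ kills $E$ and $F$ applied to every basis vector. By weights, only $E(y\otimes y)$ and $F(x\otimes x)$ can have a nonzero image. Using $\Delta(E)=E\otimes 1+K\otimes E$ we get $E(y\otimes y)=x\otimes y+q^{-1}\,y\otimes x$, which maps to $-q^{-1}+q^{-1}=0$. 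Similarly $\Delta(F)=F\otimes K^{-1}+1\otimes F$ gives $F(x\otimes x)=q^{-1}\,y\otimes x+x\otimes y$, which maps to $q^{-1}-q^{-1}=0$.

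For step (2), I read the diagrams bottom to top, with input $V\otimes S\otimes V$.
\begin{itemize}
\item The left-hand side is $(\blackcap\otimes\id_S)\circ(\id_V\otimes\iso)$. Here $\iso$ moves the purple strand past the right black strand, and the cap then closes the two black strands.
\item The right-hand side is $-(\id_S\otimes\blackcap)\circ(\inv\otimes\id_V)$.
\end{itemize}
Both sides preserve $K$-weights and $S$ has weight $-1$. Therefore the vectors $x\otimes\devil\otimes x$ and $y\otimes\devil\otimes y$ go to $0$ on both sides, because the cap kills them. The real content is the two vectors $x\otimes\devil\otimes y$ and $y\otimes\devil\otimes x$.
\begin{itemize}
\item For $x\otimes\devil\otimes y$: the left side gives $\iso(\devil\otimes y)=-y\otimes\devil$, and then $-\blackcap(x\otimes y)\,\devil=q^{-1}\devil$. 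The right side gives $\inv(x\otimes\devil)=\devil\otimes x$, and then $-\blackcap(x\otimes y)\,\devil=q^{-1}\devil$.
\item For $y\otimes\devil\otimes x$: the left side gives $\blackcap(y\otimes x)\,\devil=\devil$, since $\iso$ fixes $\devil\otimes x$. The right side picks up the sign from $\inv(y\otimes\devil)=-\devil\otimes y$ and the global $-1$, giving $\blackcap(y\otimes x)\,\devil=\devil$.
\end{itemize}

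I do not expect a genuine obstacle. The only delicate point is reading the diagrams correctly: which crossing ($\iso$ or $\inv$) sits on which pair of strands, and in which order the cap's inputs are fed. This matters because $\blackcap$ is not symmetric ($-q^{-1}$ versus $1$). I would fix the convention by the labelled pictures $\isolabel$ and $\invlabel$, with domain at the bottom and codomain at the top. The sign $\textcolor{epurple}{(-1)}$ in the relation then arises precisely because each crossing contributes its $-1$ on a different basis vector, once on the $y$ in the right slot and once on the $y$ in the left slot.
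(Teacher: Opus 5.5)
Your proposal is correct and follows the paper's proof: the paper also just evaluates both sides on the four basis vectors $a\otimes\devil\otimes b$ and gets $0$, $q^{-1}$, $1$, $0$, and your reading of the two diagrams gives exactly these values. The only cosmetic difference is how the cap map $V\otimes V\to\mU$ is shown to be a morphism. The paper exhibits it as the quotient by the three-dimensional subrepresentation spanned by $y\otimes y$, $x\otimes y+q^{-1}y\otimes x$, $x\otimes x$, whereas you check $E$, $F$, $K$ directly; this is the same computation, since $E(y\otimes y)=F(x\otimes x)=x\otimes y+q^{-1}y\otimes x$.
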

	\begin{proof}
		As $\U$-representation $V\otimes V$ looks as follows:
		\[
		V\otimes V\colon \quad \begin{tikzcd}[row sep=large]
			& x\otimes y \arrow[ld, "q"', bend left, eblue] \arrow[rd, "q", bend left,  ered], \arrow[loop, "1"', distance=2em, in=135, out=45, egreen, looseness=8]     &                                                                         \\
			y\otimes y \arrow["q^{-2}"', loop, distance=3em, in=235, out=135, egreen]  \arrow[ru, "1", bend left,  ered] \arrow[rd, "q^{-1}"', bend right, ered] &                                                                     & x\otimes x \arrow[lu, "1"', bend left, eblue] \arrow[ld, "q^{-1}", bend right,  eblue] \arrow["q^{2}"', loop, distance=3em, in=45, out=-45, egreen] \\
			& y\otimes x \arrow[lu, "1", bend right,  eblue] \arrow[ru, "1"', bend right, ered] \arrow[loop, "1"', distance=2em, in=-45, out=-135, egreen, looseness=8]     &           &                                                                        
		\end{tikzcd} \, .
		\]
		It has a three dimensional subrepresentation
		\[
		\begin{tikzcd}
			y\otimes y \arrow["q^{-2}"', loop, distance=3em, in=235, out=135, egreen] \arrow[r, "1", bend left, ered] & x\otimes y+q^{-1} y\otimes x \arrow[loop, "1"', distance=2em, in=-45, out=-135, egreen, looseness=8]   \arrow[l, "q+q^{-1}", bend left, eblue] \arrow[r, "q+q^{-1}", bend left, ered] & x\otimes x \arrow[l, "1", bend left, eblue] \arrow["q^{2}"', loop, distance=3em, in=45, out=-45, egreen]
		\end{tikzcd} \, . 
		\]
		The morphism $V\otimes V\to \mU$ is simply the quotient map with respect to this subrepresentation. One quickly computes that both sides of the relation are given by the morphism sending
		\[
			x\otimes \devil \otimes x \mapsto 0, \quad  x\otimes \devil \otimes y \mapsto q^{-1}, \quad y\otimes \devil \otimes x\mapsto 1, \quad y\otimes \devil \otimes y\mapsto 0.   \qedhere
		\]
	\end{proof}
	Further we will need one more easy lemma.
	\begin{lemm} \label{lemma without minus}
		We have an isomorphism 
		\[
		\purplecap\coloneqq \purplecaplabel \colon S\otimes S\to \mU,
		\]
		given on a basis by the assignment $\devil \otimes \devil \mapsto 1$. Furthermore the following Brauer relation between morphisms $S\otimes V\otimes S\to V$ holds:
		\[
		\cbox{
			\begin{tikzpicture}[tldiagram, scale=0.65]
				\posredbluecrossing{0}{1}
				\draw[epurple] \tlcoord{0}{0} \lineup \capright;
				\draw[black] \tlcoord{1}{2} \halflineup;
			\end{tikzpicture}
		} =  \cbox{
			\begin{tikzpicture}[tldiagram, scale=0.65]
				\posblueredcrossing{0}{0}
				\draw[epurple] \tlcoord{0}{2} \lineup \capleft;
				\draw[black] \tlcoord{1}{0} \halflineup;
			\end{tikzpicture}
		}.
		\]
	\end{lemm}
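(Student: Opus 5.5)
The plan is to check both claims directly on basis vectors, as in the proofs of Lemma \ref{lemma V otimes S} and Lemma \ref{lemma with minus}. Every object involved is one- or two-dimensional, so everything reduces to a handful of scalar checks.

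\emph{The cap is an isomorphism.} First I compute the action on $S\otimes S$ via $\Delta$.
\begin{itemize}
\item $K$ acts by $K\otimes K$, i.e.\ by $(-1)\cdot(-1)=1$.
\item $E$ acts by $E\otimes 1+K\otimes E$, which is zero because $E$ kills $\devil$.
\item $F$ acts by $F\otimes K^{-1}+1\otimes F$, which is zero because $F$ kills $\devil$.
\end{itemize}
So $S\otimes S$ has the same action graph as $\mU$. Hence $\devil\otimes\devil\mapsto 1$ is a $\U$-linear bijection, i.e.\ an isomorphism $S\otimes S\to \mU$.

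\emph{The Brauer relation.} Next I read off the two composites $S\otimes V\otimes S\to V$ from the pictures, using the crossings fixed in Lemma \ref{lemma V otimes S}.
\begin{itemize}
\item Left-hand side: apply $\inv$ to the last two tensor factors ($V\otimes S\to S\otimes V$), then apply the purple cap to the first two factors.
\item Right-hand side: apply $\iso$ to the first two factors ($S\otimes V\to V\otimes S$), then apply the purple cap to the last two factors.
\end{itemize}
Both sides are composites of morphisms in $\Rep$, so I only need to compare them on the basis $\devil\otimes x\otimes\devil$ and $\devil\otimes y\otimes\devil$.

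On the left-hand side:
\[
\devil\otimes x\otimes \devil\mapsto \devil\otimes\devil\otimes x\mapsto x,\qquad \devil\otimes y\otimes\devil\mapsto -\devil\otimes\devil\otimes y\mapsto -y .
\]
On the right-hand side:
\[
\devil\otimes x\otimes \devil\mapsto x\otimes\devil\otimes\devil\mapsto x,\qquad \devil\otimes y\otimes\devil\mapsto -y\otimes\devil\otimes\devil\mapsto -y .
\]
The two composites agree, so the relation holds with no extra sign.

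The only delicate point is reading the diagram conventions correctly: which crossing is $\inv$ and which is $\iso$, and which pair of strands the cap closes. A misreading would not change the outcome, though. Each crossing contributes exactly one factor $-1$ on the $y$-component, and the purple cap contributes no sign. So both sides are the map $x\mapsto x$, $y\mapsto -y$ in any case.

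This is the contrast with Lemma \ref{lemma with minus}. There, the black cap introduces an asymmetry between $x\otimes y$ and $y\otimes x$ that produces the devil's sign; here no such asymmetry occurs.
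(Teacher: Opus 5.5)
Your proof is correct and is essentially the paper's argument. The paper reads off the isomorphism $S\otimes S\cong\mU$ from the action (you spell out $\Delta$ explicitly), and it verifies the Brauer relation by checking that both composites, $(\text{cap}\otimes\id_V)\circ(\id_S\otimes\inv)$ and $(\id_V\otimes\text{cap})\circ(\iso\otimes\id_S)$, send $\devil\otimes x\otimes\devil\mapsto x$ and $\devil\otimes y\otimes\devil\mapsto -y$, exactly as you do.

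Your closing heuristic is not needed for the proof, and it could be stated more precisely. The sign in Lemma~\ref{lemma with minus} does not depend on the black cap's coefficients $-q^{-1}$ and $1$. It arises because each nonzero term of the black cap contains exactly one $y$, so the two sides pick up the crossing's $-1$ from different $V$-strands.
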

	\begin{proof}
		The isomorphism follows again from the action. One can check that both sides send $\devil \otimes x\otimes \devil \mapsto x$ and $\devil \otimes y\otimes \devil \mapsto -y$.
	\end{proof}
	Now we can prove devils sign theorem.
	\begin{proof}[Proof of Theorem \ref{theo no braiding}]
		We assume that a braiding $c$ exists. By Observation \ref{observation braiding candidate} we know there exists $\lambda\in k^{\times}$ such that $c_{S, V}=\lambda \cdot \iso$. To simplify reading the proof we will draw $\goldiso\coloneqq c_{S,V}$. 
		By the hexagon identities we have:
		\[
			\cbox{
			\begin{tikzpicture}[tldiagram, scale=0.65]
				\draw[black, dashed] \tlcoord{0}{1} \linewave{1}{-1};
				\draw[black, dashed] \tlcoord{0}{2} \linewave{1}{-1};
				\draw[white, double=epurple] \tlcoord{0}{0} \linewave{1}{2};
			\end{tikzpicture}
			}
			\coloneqq c_{S, V\otimes V}=(\id_V \otimes c_{S,V})\circ (c_{S,V}\otimes \id_V)=\cbox{
				\begin{tikzpicture}[tldiagram, scale=0.65]
					
					\draw[black, dashed] \tlcoord{0}{1} \linewave{1}{-1};
					\draw \tlcoord{1}{0} \lineup;
					\draw \tlcoord{0}{2} \lineup;
					\draw[black, dashed] \tlcoord{1}{2} \linewave{1}{-1};
					\draw[white, double=epurple] \tlcoord{0}{0} \linewave{1}{1} \linewave{1}{1};
				\end{tikzpicture}
			} .
		\]
		Further by the the fact that the braiding is a natural isomorphism $\blank\otimes \bblank \Rightarrow \bblank \otimes \blank$ and braiding with the monoidal unit is trivial, we have
		\begin{equation} \label{naturality of braiding}
			\cbox{
				\begin{tikzpicture}[tldiagram, scale=0.65]
					\draw[white, double=black, dashed] \tlcoord{0}{1} \linewave{1}{-1};
					\draw[white, double=black, dashed] \tlcoord{0}{2} \linewave{1}{-1};
					\draw \tlcoord{0.9}{0} \capright;
					\draw[white, double=epurple] \tlcoord{0}{0} \linewave{1}{2};
					\draw[epurple] \tlcoord{1}{2} \halflineup;
				\end{tikzpicture}
			} = \cbox{
			\begin{tikzpicture}[tldiagram, scale=0.65]
				\draw[black] \tlcoord{0}{1} \capright;
				\draw[white, double=epurple] \tlcoord{0}{0} \lineup;
			\end{tikzpicture}
			}.
		\end{equation}
		On the other hand we can compute using $\goldiso=\lambda \cdot \iso$ and see that
		\[
			\cbox{
				\begin{tikzpicture}[tldiagram, scale=0.65]
					\draw[white, double=black, dashed] \tlcoord{0}{1} \linewave{1}{-1};
					\draw[white, double=black, dashed] \tlcoord{0}{2} \linewave{1}{-1};
					\draw \tlcoord{0.9}{0} \capright;
					\draw[white, double=epurple] \tlcoord{0}{0} \linewave{1}{2};
					\draw[epurple] \tlcoord{1}{2} \halflineup;
				\end{tikzpicture}
			} =\lambda^2 \cdot \cbox{
			\begin{tikzpicture}[tldiagram, scale=0.65]
				\draw[white, double=black] \tlcoord{0}{1} \linewave{1}{-1} \capright;
				\draw[white, double=black] \tlcoord{0}{2} \linewave{1}{-1};
				\draw[white, double=epurple] \tlcoord{0}{0} \linewave{1}{2} \halflineup;
			\end{tikzpicture}
			}= \lambda^{2}\cdot \textcolor{epurple}{(-1)}\cdot \cbox{
			\begin{tikzpicture}[tldiagram, scale=0.65]
				\draw[white, double=black] \tlcoord{0}{2} \lineup \lineup \capleft \linewave{-1}{-1} \linewave{-1}{1};
				\draw[white, double=epurple] \tlcoord{0}{0} \linewave{1}{1} \linewave{1}{-1} \halflineup;
			\end{tikzpicture}
			}=(-\lambda^2)\cdot \cbox{
			\begin{tikzpicture}[tldiagram, scale=0.65]
				\draw[epurple] \tlcoord{0}{0} \lineup;
				\draw[white, double=black] \tlcoord{0}{1} \capright;
			\end{tikzpicture}
			}
		\]
		by Lemma \ref{lemma with minus}. Hence by naturality \ref{naturality of braiding}, we can can compare coefficients to conclude that $-\lambda^2=1$. This shows that $\lambda=\textcolor{epurple}{\pm i}\in \C$. Now we play the same game using $c_{S\otimes S, V}^{-1}\colon V\otimes S\otimes S\to S\otimes S\otimes V$.
		Similar to \ref{naturality of braiding} we should have
		\begin{equation} \label{equation needed}
		\cbox{
			\begin{tikzpicture}[tldiagram, scale=0.65]
				\draw[black, dashed] \tlcoord{0}{0} \linewave{1}{2};
				\draw \tlcoord{1}{2} \halflineup;
				\draw[white, double=epurple] \tlcoord{0}{1} \linewave{1}{-1};
				\draw[epurple] \tlcoord{1}{0} \capright;
				\draw[epurple] \tlcoord{0}{1} \linewave{1}{-1}; 
				\draw[white, double=epurple] \tlcoord{0}{2} \linewave{1}{-1};
			\end{tikzpicture}
		} = \cbox{
			\begin{tikzpicture}[tldiagram, scale=0.65]
				\draw \tlcoord{0}{0} \lineup;
				\draw[epurple] \tlcoord{0}{1} \capright;
			\end{tikzpicture}
		}.
		\end{equation}
		However using that $\goldinv=\frac{1}{\lambda} \cdot \inv$, our previous restraint on $\lambda$ and Lemma \ref{lemma without minus} we see that
		\[
		\cbox{
			\begin{tikzpicture}[tldiagram, scale=0.65]
				\draw[black, dashed] \tlcoord{0}{0} \linewave{1}{2};
				\draw \tlcoord{1}{2} \halflineup;
				\draw[white, double=epurple] \tlcoord{0}{1} \linewave{1}{-1};
				\draw[epurple] \tlcoord{1}{0} \capright;
				\draw[epurple] \tlcoord{0}{1} \linewave{1}{-1}; 
				\draw[white, double=epurple] \tlcoord{0}{2} \linewave{1}{-1};
			\end{tikzpicture}
		}=\textcolor{epurple}{(\mp i)}^2 \cbox{
		\begin{tikzpicture}[tldiagram, scale=0.65]
			\draw[black] \tlcoord{0}{0} \linewave{1}{2} \halflineup;
			\draw[white, double=epurple] \tlcoord{0}{1} \linewave{1}{-1};
			\draw[epurple] \tlcoord{1}{0} \capright;
			\draw[epurple] \tlcoord{0}{1} \linewave{1}{-1}; 
			\draw[white, double=epurple] \tlcoord{0}{2} \linewave{1}{-1};
		\end{tikzpicture}
		}=\textcolor{epurple}{(-1)}\cdot \cbox{
		\begin{tikzpicture}[tldiagram, scale=0.65]
			\draw[black] \tlcoord{0}{0} \linewave{1}{2} \halflineup;
			\draw[white, double=epurple] \tlcoord{0}{1} \linewave{1}{-1};
			\draw[epurple] \tlcoord{1}{0} \capright;
			\draw[epurple] \tlcoord{0}{1} \linewave{1}{-1}; 
			\draw[white, double=epurple] \tlcoord{0}{2} \linewave{1}{-1};
		\end{tikzpicture}
		}=\textcolor{epurple}{(-1)}\cdot \cbox{
		\begin{tikzpicture}[tldiagram, scale=0.65]
			\draw[black] \tlcoord{0}{0} \linewave{1}{1} \linewave{1}{-1} \halflineup;
			\draw[white, double=epurple] \tlcoord{0}{2} \lineup \lineup \capleft \linewave{-1}{-1} \linewave{-1}{1};
		\end{tikzpicture}
		} =\textcolor{epurple}{(-1)}\cdot \cbox{
		\begin{tikzpicture}[tldiagram, scale=0.65]
			\draw \tlcoord{0}{0} \lineup;
			\draw[epurple] \tlcoord{0}{1} \capright;
		\end{tikzpicture}
		},
		\]
		which is a contradiction to condition \ref{equation needed}. This finishes the proof.
	\end{proof}

	\begin{rema}
		We finish this paper by pointing out two subcategories of $\Rep$, which \textbf{are} braided monoidal. One of them is the smallest subcategory of $\Rep$ containing the type $1$ standard representation $V$, which is closed under $\otimes$, direct sums and summands. The other is the smallest subcategory of $\Rep$ containing the type $-1$ standard representation $V\otimes S$, which is closed under $\otimes$, direct sums and summands. Both of them are Karoubian closures of the Temperley--Lieb category for an appropriate choice of parameter $\delta$. For the first one this parameter is $\delta=-q-q^{-1}$, while for the other one the parameter is $\delta=q+q^{-1}$. Neither of this categories contains the type $-1$ trivial representation $S$, which led to problematic sign $\textcolor{epurple}{-1}$. There is a different subtlety here. The morphism coming from the Hecke algebra needs to be rescaled by $v=\sqrt{q}\in k$. In particular if one works over $k=\C(q)$, they are not braided. This is a story for another nitpicky conversation.
	\end{rema}
	\printbibliography
	
\end{document}